\newcommand\blue{\color{blue}}
\newcommand\mytitle{Boundary systems
and (skew-)self-adjoint operators on infinite metric graphs}
\numberwithin{equation}{section}
\newtheorem{theorem}{Theorem}[section]
\newtheorem{corollary}[theorem]{Corollary}
\newtheorem{proposition}[theorem]{Proposition}
\newtheorem{lemma}[theorem]{Lemma}
\theoremstyle{definition}
\newtheorem*{definition}{Definition}
\newtheorem{remark}[theorem]{Remark}
\newtheorem{remarks}[theorem]{Remarks}
\newtheorem{examples}[theorem]{Examples}
 \mathchardef\ordinarycolon\mathcode`\:
\newcommand\smid{\nonscript \mskip2mu plus2mu {\mid}%
\nonscript \mskip2mu plus2mu}     
\def\scpr(#1,#2){{(#1\smid#2)}}
\def\bigscpr(#1,#2){{\left(#1\nonscript \mskip2mu plus2mu \middle| \nonscript \mskip2mu
plus2mu#2\right)}}
\newcommand\vtx{\gamma}
\newcommand\bm{F}
\newcommand\trace{\operatorname{tr}}
\newcommand\strace{\operatorname{str}}
\newcommand\li{{\rm l}}
\newcommand\re{{\rm r}}
\newcommand{\spt}{\operatorname{spt}}
\newcommand{\lin}{\operatorname{lin}}
\newcommand\imu{{\rm i}}
\renewcommand\phi{\varphi}
\renewcommand\epsilon{\varepsilon}
\newcommand{\R}{\mathbb{R}\nonscript\hskip.03em}
\newcommand{\N}{\mathbb{N}\nonscript\hskip.03em}
\newcommand{\Z}{\mathbb{Z}\nonscript\hskip.03em}
\newcommand{\C}{\mathbb{C}\nonscript\hskip.03em}
\newcommand{\K}{\mathbb{K}\nonscript\hskip.03em}
\newcommand\rma{{\rm (a) }}
\newcommand\rmb{{\rm (b) }}
\newcommand\rmc{{\rm (c) }}
\newcommand\rmd{{\rm (d) }}
\newcommand\gr[1]{#1}
\renewcommand\gr[1]{G(#1)}
\newcommand\fin{{\rm fin}}
\def\formE(#1,#2){\sum_{e\in E}\int_{a_e}^{b_e} #1_e'(x)\ol{#2_e'(x)}\,dx}
\let\qedhere@ams\qedhere
\def\qedhere{\@ifnextchar[{\@qedhere}{\qedhere@ams}}
\def\@qedhere[#1]{\tag*{\raisebox{-#1ex}{\qedhere@ams}}}
\def\env@cases{%
  \let\@ifnextchar\new@ifnextchar
  \left\lbrace
  \def\arraystretch{1.1}%
  \array{@{\,}l@{\quad}l@{}}%
}
\renewcommand\section{\@startsection {section}{1}{\z@}%
                                     {-3.25ex \@plus -1ex \@minus -.2ex}%
                                     {1.5ex \@plus.2ex}%
                                     {\normalfont\large\bfseries}}
\newcommand\set[2]{\bigl\{#1{;}\;#2\bigr\}}
\newcommand\sset[2]{\{#1{;}\;#2\}}
\newcommand\ol{\overline}
\renewcommand\le{\leqslant}
\renewcommand\ge{\geqslant}
\newcommand{\from}{{:}\;}
\renewcommand{\from}{\colon}
\newcommand\sse{\subseteq}
\newcommand\cH{\mathcal{H}}
\newcommand\cG{\mathcal{G}}
\newcommand\cHg{{\mathcal H_\Gamma}}
\newcommand\pskew{\hbox{(skew-)}\hskip0pt}
\newcommand{\abstracttext}{\noindent
We generalize the notion of Lagrangian subspaces to self-orthogonal subspaces
with respect to a
\pskew symmetric form, thus characterizing \pskew self-adjoint
and unitary operators by means of self-ortho\-gonal subspaces.
By orthogonality preserving mappings, these characterizations can be
transferred to abstract boundary value spaces of \pskew symmetric operators.
Introducing the notion of boundary systems we then present a unified treatment
of different versions of boundary triples and related concepts treated in the
literature.
The application of the abstract results yields a
description of all \pskew self-adjoint realizations of
Laplace and first derivative operators on graphs.

\vspace{8pt}

\noindent
MSC 2010: 47B25, 35Q99, 05C99
\vspace{2pt}

\noindent
Keywords: boundary triple, \pskew self-adjoint operators, quantum graphs
}
\begin{document}
\title{\mytitle}

\author{Carsten Schubert, Christian Seifert, J\"urgen Voigt\\ and Marcus
Waurick}

\date{}

\maketitle

\begin{abstract}
\abstracttext
\end{abstract}

\section*{Introduction}

The problem of obtaining \pskew self-adjoint extensions of \pskew symmetric
operators is a standard task in operator theory and applications. The classical
description, given by von Neumann, is by finding unitary maps between
deficiency spaces.
A seemingly different established description is to find all Lagrangian
subspaces with respect to the corresponding boundary form{\blue ;} see
e.g.~\cite{brge03,brgepa2008,es10,har00,gor91,cod73}. This method has been
formalized in the language of boundary triples.
For the case of \pskew symmetric operators a procedure has been described in
\cite{waukal11}, using the notion of SWIPs
(systems with integration by parts).

It is one of the purposes of our paper to present a unified treatment of
the methods mentioned above. This is achieved by introducing the notion of
`boundary systems', a generalized version of boundary triples. The
generalization consists in allowing more flexibility for the quantities
occurring in the setup.

Self-adjoint Laplace
operators on metric graphs arise by choosing appropriate boundary conditions at
the vertices.
The first treatment of this topic was given in 
\cite{ks99}, characterizing self-adjointness of Laplacians on finite star
graphs
by Lagrangian subspaces with respect to a standard form.
Kuchment \cite{kuc04} gave another description of the boundary conditions
leading to semibounded self-adjoint Laplacians on graphs with finite vertex
degree. The question
whether all self-adjoint realizations of the Laplacian on a metric
graph can be obtained by choosing self-adjoint operators in the space of
boundary values arose in the thesis \cite{schubert2011} of one of the 
authors.
With the help of our methods we answer this question for graphs with a positive
lower bound for the edge lengths in Theorem \ref{s-a Lapl}.
This is essentially the description of Laplacians on graphs in the form 
treated by 
Kuchment \cite{kuc04}. 
We stress that we can deal with infinite metric graphs with 
infinite vertex degree which have 
been studied recently \cite{schubert2011,lsv2012}.
Since we also do not assume semi-boundedness of the corresponding 
operator, the form approach may not be applicable.
\medskip

In Section \ref{sec:sls} 
we introduce the abstract context, and we describe the relation of
self-orthogonal subspaces to \pskew self-adjoint or unitary operators.

In Section \ref{sec:bs} we introduce boundary systems, and we present the
abstract results how \pskew self-adjointness of extensions of \pskew symmetric
operators can be described by self-orthogonal subspaces in the `boundary
space'. We mention that in the hypotheses of a boundary system there is no
a priori requirement concerning the deficiency indices of the operator to be
extended.

In Section~\ref{sec:qg} we
apply the abstract theory to Laplace operators and the first derivative operator
on metric 
graphs.
On a metric graph, the Laplace operator is self-adjoint if and only if
all the boundary values of a function in the domain are related to all boundary
values of the derivative of the function via some self-adjoint operator acting
in a subspace of all possible boundary values. 
In two examples we present the application of our result to infinite 
graphs, one of them with infinite vertex degree.
For the derivative operator, we
have that the respective operator is skew-self-adjoint if and only if the
relation between the boundary values at the end points of the edges and the
boundary values at the starting points of the edges is unitary.

\section{Sesquilinear forms and self-orthogonal subspaces}
\label{sec:sls}

We start with basic observations concerning a version of `orthogonality'. Let 
$X$ be a set, and let $R\sse X\times X$ be an `orthogonality relation'. For 
$U\sse X$ we define the \emph{$R$-orthogonal complement}
\[
U^{\perp_R}:=\set{x\in X}{(x,y)\in R\ (y\in U)}
\]
of $U$, and $U$ will be called \emph{$R$-self-orthogonal} if $U^{\perp_R}=U$.

\begin{theorem}\label{map-so}
Let $X_1,X_2$ be sets, and let $R_j\sse X_j\times X_j$ ($j\in\{1,2\}$). Let 
$F\colon X_1\to X_2$ be surjective, and assume that
\[
R_1 = (\bm\times\bm)^{-1}(R_2).
\]

Then a set $U\sse X_1$ is $R_1$-self-orthogonal if and only if there exists 
an $R_2$-self-orthogonal set $V\sse X_2$ such that $U=\bm^{-1}(V)$.
\end{theorem}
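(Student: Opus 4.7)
The plan is to reduce everything to a single commutation identity: for any $V\sse X_2$,
\[
\bm^{-1}(V)^{\perp_{R_1}} = \bm^{-1}\bigl(V^{\perp_{R_2}}\bigr).
\]
This identity is the workhorse. I would establish it by unfolding the definitions: $x\in\bm^{-1}(V)^{\perp_{R_1}}$ means $(x,y)\in R_1$ for all $y\in\bm^{-1}(V)$, which by the hypothesis $R_1=(\bm\times\bm)^{-1}(R_2)$ becomes $(\bm(x),\bm(y))\in R_2$ for all $y\in\bm^{-1}(V)$. Since $\bm$ is surjective, $\bm(\bm^{-1}(V))=V$, so this is equivalent to $\bm(x)\in V^{\perp_{R_2}}$, i.e.\ $x\in\bm^{-1}(V^{\perp_{R_2}})$.

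From this identity, the implication ``$\Leftarrow$'' is immediate: if $V=V^{\perp_{R_2}}$, then $U:=\bm^{-1}(V)$ satisfies $U^{\perp_{R_1}}=\bm^{-1}(V^{\perp_{R_2}})=\bm^{-1}(V)=U$. For the converse, I would take the obvious candidate $V:=\bm(U)$ and check two things: that $U=\bm^{-1}(V)$, and that $V$ is $R_2$-self-orthogonal. The second point is free once the first is established: from $U=\bm^{-1}(V)$ and the self-orthogonality of $U$ together with the commutation identity we obtain $\bm^{-1}(V)=\bm^{-1}(V^{\perp_{R_2}})$, and surjectivity of $\bm$ (which gives $\bm(\bm^{-1}(A))=A$ for every $A\sse X_2$) then forces $V=V^{\perp_{R_2}}$.

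The main point---and the only step that actually uses that $U$ is $R_1$-self-orthogonal rather than just $R_1$-closed---is showing $\bm^{-1}(\bm(U))\sse U$, i.e.\ that $U$ is saturated under the equivalence relation induced by $\bm$. Here is where I expect the only real subtlety: given $x$ with $\bm(x)=\bm(u)$ for some $u\in U$, I want to test $x$ against $U$. For any $y\in U$, the hypothesis on $R_1$ gives $(x,y)\in R_1\iff(\bm(x),\bm(y))\in R_2\iff(\bm(u),\bm(y))\in R_2\iff(u,y)\in R_1$, and the latter holds because $u\in U=U^{\perp_{R_1}}$. Hence $x\in U^{\perp_{R_1}}=U$. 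This saturation lemma is the one step that could easily be overlooked, but once it is in hand the theorem assembles in a few lines.
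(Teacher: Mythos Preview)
Your proof is correct and follows essentially the same route as the paper's. The paper records two identities---your commutation identity $\bm^{-1}(V)^{\perp_{R_1}}=\bm^{-1}(V^{\perp_{R_2}})$ and the observation $U^{\perp_{R_1}}=\bm^{-1}(\bm(U))^{\perp_{R_1}}$ (valid for all $U$)---and then says the theorem is immediate; your saturation step $\bm^{-1}(\bm(U))\sse U$ for self-orthogonal $U$ is just the second identity specialized to $U=U^{\perp_{R_1}}$, with the roles of the two arguments swapped in the verification, so the arguments are the same up to presentation.
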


\begin{proof}
Taking into account the surjectivity of $F$, one easily obtains that
\[
\bm^{-1}(V)^{\perp_{R_1}}=\bm^{-1}(V^{\perp_{R_2}})
\]
for all $V\sse X_2$. Also, one obtains that
\[
U^{\perp_{R_1}}=F^{-1}(\bm(U))^{\perp_{R_1}}\, 
(\,=\bm^{-1}(F(U)^{\perp_{R_2}})\,)
\]
for all $U\sse X_1$.

From these observations the assertions of the theorem are immediate.
\end{proof}

We now introduce \pskew symmetric forms and self-orthogonal subspaces.
All vector spaces will be vector spaces over $\K$, where $\K\in\{\R,\C\}$.

\begin{definition}
Let $X$ be a vector space,
and let
$\omega\from X\times X\to \K$ be sesquilinear.
We use the relation
\[
R:=\set{(x,y)\in X\times X}{\omega(x,y)=0}
\]
in order to define `orthogonality' in $X$, and we thus write
\[
U^{\perp_\omega}:=U^{\perp_R}=\set{x\in X}{\omega(x,y) = 0 \ (y\in U)},
\]
for $U\sse X$.
%
%
We will use the terminology \emph{$\omega$-self-orthogonal}
to mean orthogonality with respect to the above relation $R$. 

The form $\omega$ is called \emph{symmetric}, if
\[\omega(x,y) = \overline{\omega(y,x)} \quad(x,y\in X),\]
and \emph{skew-symmetric} (or \emph{symplectic}), if
\[\omega(x,y) = -\overline{\omega(y,x)} \quad(x,y\in X).\]
\end{definition}

In the case of skew-symmetric forms, self-orthogonal subspaces are also called
\emph{Lagrangian}.
Note that there are different definitions of Lagrangian subspaces in the
literature; see e.g. \cite{evma04,har00,kps08}.

The following result is a reformulation of Theorem~\ref{map-so} for the present 
context.

\begin{corollary}\label{prop:surj}
Let $X_1, X_2$ be vector spaces, and let $\omega_j$ be a sesquilinear form
on $X_j$ ($j\in\{1,2\}$). Let $\bm\from X_1\to X_2$ be linear and
surjective, and such that
\[\omega_1(x,y) = \omega_2(\bm(x),\bm(y)) \quad(x,y\in X_1 ).\]

Then a subspace $U\sse X_1$ is $\omega_1$-self-orthogonal if and only if there 
exists 
an $\omega_2$-self-orthogonal subspace $V\sse X_2$ such that $U=\bm^{-1}(V)$.
\end{corollary}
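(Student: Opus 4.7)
The plan is to derive the corollary by directly specializing Theorem~\ref{map-so}. The sesquilinear forms $\omega_j$ produce natural orthogonality relations, and the hypothesis of the theorem is exactly the hypothesis of the corollary written in relational form.

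First I would set $R_j:=\set{(x,y)\in X_j\times X_j}{\omega_j(x,y)=0}$ for $j\in\{1,2\}$, so that by definition of $U^{\perp_\omega}$ the two notions of self-orthogonality agree: a set is $\omega_j$-self-orthogonal if and only if it is $R_j$-self-orthogonal. Next I would check the hypothesis $R_1=(\bm\times\bm)^{-1}(R_2)$ of Theorem~\ref{map-so}; this is immediate, since $(x,y)\in R_1$ iff $\omega_1(x,y)=0$ iff $\omega_2(\bm(x),\bm(y))=0$ iff $(\bm(x),\bm(y))\in R_2$, where the middle equivalence is just the compatibility assumption $\omega_1(x,y)=\omega_2(\bm(x),\bm(y))$.

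With these preparations, Theorem~\ref{map-so} gives: a set $U\sse X_1$ is $\omega_1$-self-orthogonal if and only if $U=\bm^{-1}(V)$ for some $\omega_2$-self-orthogonal set $V\sse X_2$. The only point remaining is to upgrade ``set'' to ``subspace''. The direction ``$\Leftarrow$'' is trivial: if $V$ is a subspace and $\bm$ is linear, then $\bm^{-1}(V)$ is a subspace. For ``$\Rightarrow$'', assume $U$ is an $\omega_1$-self-orthogonal subspace. Inspection of the proof of Theorem~\ref{map-so} shows that one may take $V:=\bm(U)$: indeed, from the identity $U^{\perp_{R_1}}=\bm^{-1}(\bm(U)^{\perp_{R_2}})$ and $U=U^{\perp_{R_1}}$, applying $\bm$ and using surjectivity yields $\bm(U)=\bm(U)^{\perp_{R_2}}$. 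Since $\bm$ is linear, $\bm(U)$ is a subspace of $X_2$, finishing the proof.

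I do not anticipate any real obstacle; the entire content of the corollary is already present in Theorem~\ref{map-so}, and the work consists only in translating between the relation $R$ and the form $\omega$, plus the short observation that linearity lets us take the witnessing object to be a subspace.
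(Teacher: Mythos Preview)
Your proof is correct and follows exactly the approach the paper intends: the corollary is stated without proof, merely as ``a reformulation of Theorem~\ref{map-so} for the present context,'' and you have spelled out that reformulation. One small simplification for the set-versus-subspace step: any $\omega_j$-self-orthogonal set $V$ is automatically a linear subspace, since $V=V^{\perp_{\omega_j}}$ is an intersection of kernels of the linear functionals $\omega_j(\,\cdot\,,y)$, so no separate argument with $V=\bm(U)$ is needed.
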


\begin{remark}\label{rem-interpret}
In the context of the preceeding corollary, one should think of $X_1$
as a (big) space of functions, of $X_2$ as the (small) space of boundary
values, and of the mapping $\bm$ as the evaluation of the boundary values of the
functions in $X_1$.
\end{remark}

\begin{definition}
  Let $\cH_1,\cH_2$ be Hilbert spaces.

\rma
A subspace $M\subseteq \cH_1\oplus\cH_2$ is called a \emph{linear relation}.
  For a linear relation $M\subseteq \cH_1\oplus\cH_2$ we define the
\emph{inverse relation} $M^{-1}\sse\cH_2\oplus\cH_1$ by
\[
M^{-1} := \set{(y,x)\in \cH_2\oplus\cH_1}{(x,y)\in M},
\]
the \emph{orthogonal relation} of $M$ by
\[
M^\perp := \set{(x,y)\in
\cH_1\oplus\cH_2}{\scpr({(x,y)},{(u,v)})_{\cH_1\oplus\cH_2} = 0\
((u,v)\in M)},
\]
and the \emph{adjoint relation} $M^*\sse\cH_2\oplus\cH_1$ by
\[
M^*:=\set{(y,x)\in\cH_2\oplus\cH_1}
{\scpr(y,v)_{\cH_2}=\scpr(x,u)_{\cH_1}\ ((u,v)\in M)}.
\]

\rmb
If $\cH_1 = \cH_2$ and $M\subseteq M^*$, then $M$ is called \emph{symmetric},
and if $M=M^*$, then $M$ is called \emph{self-adjoint}.

\rmc
Denote $S:= \left(\begin{smallmatrix} 1 & 0 \\ 0 & -1 \end{smallmatrix}\right)$.
If $\cH_1 = \cH_2$ and $M\subseteq SM^*$, then $M$ is called
\emph{skew-symmetric}, and if $M=SM^*$, then $M$ is called
\emph{skew-self-adjoint}.

\rmd
If $M^* = M^{-1}$, then $M$ is called \emph{unitary}. In fact, unitary
relations are graphs of unitary operators; see Proposition \ref{prop:uni} below.
\end{definition}

\begin{remark}
Let $\cH_1,\cH_2$ be Hilbert spaces, $M\subseteq \cH_1\oplus\cH_2$ a linear relation.
Then
\[M^*= (SM^\perp)^{-1} = ((SM)^\perp)^{-1} = ((SM)^{-1})^\perp =
(SM^{-1})^\perp = S(M^{-1})^\perp.\]
\end{remark}

\begin{examples}
\label{ex:forms}
\rma Let $\cH$ be a Hilbert space. The \emph{standard skew-symmetric}
(or \emph{symplectic})
\emph{form} on $\cH\oplus\cH$ is defined as $\omega\from
(\cH\oplus\cH)\times (\cH\oplus\cH)\to \K$,
\[\omega((x,y),(u,v)) = \bigscpr(\begin{pmatrix} 0 & -1 \\ 1 & 0 \end{pmatrix}
\begin{pmatrix} x\\y \end{pmatrix} , \begin{pmatrix} u\\v
\end{pmatrix})_{\cH\oplus\cH} = \scpr(x , v)_{\cH}-\scpr(y , u)_{\cH}.\]
Let $M\subseteq \cH\oplus\cH$ be a linear relation.
Then
\begin{align}\label{ortho-id}
M^{\perp_\omega} = ((SM)^{-1})^\perp = M^*.
\end{align}
Hence, Lagrangian subspaces with respect to the standard skew-symmetric form are exactly the self-adjoint linear relations.

\rmb Let $\cH$ be a Hilbert space. We define the \emph{standard symmetric
form} on $\cH\oplus\cH$ by $\omega\from
(\cH\oplus\cH)\times (\cH\oplus\cH)\to \K$,
\[\omega((x,y),(u,v)) = \bigscpr(\begin{pmatrix} 0 & 1\\ 1 & 0 \end{pmatrix}
\begin{pmatrix} x\\y \end{pmatrix} , \begin{pmatrix} u\\v
\end{pmatrix})_{\cH\oplus\cH} = \scpr(x , v)_{\cH} + \scpr(y , u)_{\cH}.\]

Let $M\subseteq \cH\oplus\cH$ be a linear relation. Then
\begin{align}\label{ortho-id-sym}
M^{\perp_\omega} = (M^{-1})^\perp = SM^*.
\end{align}
Hence, self-orthogonal subspaces with respect to the standard symmetric form are
exactly the skew-self-adjoint linear relations.

\rmc Let $\cH_1,\cH_2$ be Hilbert spaces. We define the \emph{standard
unitary form} on $\cH_1\oplus\cH_2$ by $\omega\from
(\cH_1\oplus\cH_2)\times (\cH_1\oplus\cH_2)\to \K$,
\[\omega((x,y),(u,v)) = \bigscpr(\begin{pmatrix} 1 & 0 \\ 0 & -1 \end{pmatrix}
\begin{pmatrix} x\\y \end{pmatrix} , \begin{pmatrix} u\\v
\end{pmatrix})_{\cH_1\oplus\cH_2} = \scpr(x , u)_{\cH_1} - \scpr(y ,
v)_{\cH_2}.\]

Let $M\subseteq \cH_1\oplus\cH_2$ be a linear relation.
Then
\begin{align}\label{ortho-id-unit}
M^{\perp_\omega} = (SM)^\perp =  (M^*)^{-1}.
\end{align}
Hence, the self-orthogonal subspaces with respect to
the standard unitary form are exactly the unitary relations.
\end{examples}

We now describe the self-orthogonal subspaces we are dealing with in the applications.

\begin{proposition}[{cf. \cite[Theorem 5.3]{arens61}}]
\label{thm:lag_XL}
Let $\cH$ be a Hilbert space. Then a linear relation $U\sse \cH\oplus \cH$ is
self-adjoint (i.e., Lagrangian in $\cH\oplus\cH$ with respect to
the standard skew-symmetric form) if and only if there exist
a closed linear subspace
$X\subseteq \cH$ and a self-adjoint operator $L$ in $X$, such that $U = G(L)
\oplus (\{0\} \oplus X^\perp)$, where $G(L)=\set{(x,Lx)\in X\oplus X}{x\in
D(L)}$ denotes the graph of $L$.
\end{proposition}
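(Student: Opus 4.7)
The plan is to prove the two implications separately, with the main work being the ``only if'' direction where the operator $L$ must be extracted from the relation $U$.

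For the ``if'' direction, assume $X\sse\cH$ is closed with orthogonal projection $P$, and $L$ is self-adjoint in $X$. Set $U := G(L)\oplus(\{0\}\oplus X^\perp)$ and verify $U=U^*$ directly. Inclusion $U\sse U^*$ follows by expanding $\scpr({x_1},{Lx_2+z_2})_\cH=\scpr({Lx_1+z_1},{x_2})_\cH$ using the symmetry of $L$ and the facts $x_i\in X$, $z_i\in X^\perp$. For $U^*\sse U$, take $(a,b)\in U^*$. Testing with pairs $(0,z)\in U$ for $z\in X^\perp$ forces $a\in(X^\perp)^\perp=X$; testing with $(x,Lx)\in U$ for $x\in D(L)$ gives $\scpr(a,{Lx})_\cH=\scpr({Pb},x)_\cH$ (using $(I-P)b\perp x$). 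Self-adjointness of $L$ in the Hilbert space $X$ then yields $a\in D(L)$ and $La=Pb$, so $(a,b)=(a,La)+(0,(I-P)b)\in U$.

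For the ``only if'' direction, given a self-adjoint relation $U$, define the multi-valued part
\[
U_\infty := \set{y\in\cH}{(0,y)\in U}.
\]
Using $U=U^*$, one reads off that $(0,y)\in U^*$ iff $\scpr(y,u)_\cH=0$ for every $u\in\operatorname{dom}(U)$, so $U_\infty=\operatorname{dom}(U)^\perp$; in particular $U_\infty$ is closed. Set $X:=U_\infty^\perp=\overline{\operatorname{dom}(U)}$, and let $P$ denote the orthogonal projection onto $X$. Since $(0,(I-P)y)\in\{0\}\oplus U_\infty\sse U$ whenever $(x,y)\in U$, we obtain $(x,Py)\in U$, and this allows us to define $L\from\operatorname{dom}(U)\to X$ by $Lx:=Py$ for any $(x,y)\in U$. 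Well-definedness follows from the fact that $(x,y_1),(x,y_2)\in U$ imply $y_1-y_2\in U_\infty=X^\perp$, so $Py_1=Py_2$.

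It remains to show $L$ is self-adjoint in $X$. Density of $D(L)=\operatorname{dom}(U)$ in $X$ is built in. Symmetry of $L$ follows from $U\sse U^*$ via a direct computation that the cross terms involving $X^\perp$-parts vanish. For the self-adjointness of $L$ in $X$, take $a\in D(L^*_X)$ with $L^*_X a=c\in X$; testing $(a,c)$ against pairs $(x,Lx+z)\in U$ shows $(a,c)\in U^*=U$, hence $a\in\operatorname{dom}(U)=D(L)$ and $La=Pc=c$. Thus $L=L^*_X$. The representation $U=G(L)\oplus(\{0\}\oplus X^\perp)$ is then immediate from the decomposition $(x,y)=(x,Py)+(0,(I-P)y)$ obtained above. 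The main technical obstacle is this last step: keeping the bookkeeping straight between the adjoint of $L$ computed inside $X$ and the adjoint of the relation $U$ inside $\cH\oplus\cH$, especially the observation that $U_\infty$ is automatically closed, which is what makes the decomposition work.
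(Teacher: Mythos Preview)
Your proof is correct and follows essentially the same route as the paper's (outlined) argument: define the multi-valued part $U_\infty=\sset{y}{(0,y)\in U}$, set $X:=U_\infty^\perp$, and extract the operator part of $U$ living in $X\oplus X$. The only cosmetic difference is that for the ``if'' direction the paper argues via the block decomposition ($G(L)$ Lagrangian in $X\oplus X$, $\{0\}\oplus X^\perp$ Lagrangian in $X^\perp\oplus X^\perp$), whereas you verify $U=U^*$ by a direct inclusion argument; both are straightforward.
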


\begin{proof}
We only give a
short outline of the ideas. For a more detailed proof we refer to
\cite[Section 5]{arens61}.

If $X$ and $L$ are as indicated, then the space $\gr L$ is Lagrangian in
$X\oplus X$ and clearly, $\{0\} \oplus X^\perp$ is a Lagrangian subspace
of $X^\perp\oplus X^\perp$. It is not difficult to show that this implies that
$U = G(L) \oplus (\{0\} \oplus X^\perp)$ is Lagrangian.

Assume that $U$ is Lagrangian, and let $\cH_\infty:= \set{y\in\cH}{(0,y)\in U}$,
$X:= \cH_\infty^\perp$. Then
one shows that $U\cap(X\oplus X)$ is the graph of
an operator $L$, and that then $U$ is of the described form.
\end{proof}

\begin{remarks}
\label{rem:skew_XL}
(a) It follows that a self-adjoint linear relation $U\sse\cH\oplus\cH$ is the
graph of an operator if and only if its domain $P_1U$ ($P_1$ the projection
onto the first component) is dense in $\cH$.

(b) A result analogous to Proposition \ref{thm:lag_XL} holds for
self-orthogonal subspaces with respect to the standard symmetric form. Then $L$
will be a skew-self-adjoint operator.
\end{remarks}

\begin{proposition}
\label{prop:uni}
Let $\cH_1,\cH_2$ be Hilbert spaces, $U\subseteq \cH_1\oplus \cH_2$ a subspace.
Then $U$ is the graph of a unitary operator $L$ if and only if $U =
U^{\perp_\omega}$, where $\omega$ is the standard unitary form on
$\cH_1\oplus\cH_2$.
\end{proposition}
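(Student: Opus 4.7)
My plan is to use the identity from Examples \ref{ex:forms}(c), namely $U^{\perp_\omega} = (U^*)^{-1}$, which reduces the statement to: $U$ is the graph of a unitary operator if and only if $U^* = U^{-1}$ (the definition of unitarity for relations given in the paper). The forward direction is an easy computation: if $U = G(L)$ for a unitary $L\from\cH_1\to\cH_2$, then a direct check against the defining equation $\scpr(y,v)_{\cH_2}=\scpr(x,u)_{\cH_1}$ shows $U^* = G(L^*) = G(L^{-1}) = U^{-1}$.

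The substance is in the converse, which I would break into four steps, all driven by $U = U^{\perp_\omega}$, i.e.\ $\omega((x,y),(x',y'))=\scpr(x,x')_{\cH_1}-\scpr(y,y')_{\cH_2}=0$ for all $(x',y')\in U$ iff $(x,y)\in U$. First, $U$ is a graph: if $(0,y)\in U$, self-orthogonality applied to $(0,y)$ against itself gives $\|y\|^2 = 0$. So there is an operator $L$ with $U=G(L)$; the same argument with the roles of $\cH_1,\cH_2$ swapped shows $L$ is injective. Second, $L$ is isometric: applying the vanishing of $\omega$ to two elements $(x,Lx),(x',Lx')\in U$ immediately yields $\scpr(Lx,Lx')_{\cH_2}=\scpr(x,x')_{\cH_1}$.

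Third, $D(L)$ and $R(L)$ are dense. For density of $D(L)$, take $x_0\perp D(L)$; then $(x_0,0)$ annihilates $U$ under $\omega$ (since $\scpr(x_0,x')_{\cH_1}=0=\scpr(0,Lx')_{\cH_2}$ for $x'\in D(L)$), so by $U^{\perp_\omega}\subseteq U$ we get $(x_0,0)\in U$, whence $x_0=0$ by the injectivity established above. The argument for $R(L)$ is symmetric. Fourth, $U$ is closed because $U^{\perp_\omega}=(SU)^\perp$ is an orthogonal complement. Since $L$ is bounded (isometric) and closed with dense domain, a standard argument (Cauchy preservation under $L$ plus closedness of the graph) shows $D(L)$ is closed; combined with density, $D(L)=\cH_1$. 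Similarly $R(L)=\cH_2$, so $L$ is a surjective isometry between Hilbert spaces, i.e.\ unitary.

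I do not anticipate a single hard obstacle; the only place requiring care is packaging the two-sided use of self-orthogonality (inclusion and reverse inclusion) so that the graph property, isometry, density of domain and range, and closedness are each extracted from the correct half. Everything else reduces to the adjoint/orthogonal algebra collected in the examples preceding the proposition.
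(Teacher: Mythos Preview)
Your proposal is correct and follows essentially the same route as the paper's proof: both extract the graph/isometry property from $\omega((x,y),(x,y))=\|x\|^2-\|y\|^2=0$, then obtain density of domain and range by noting that $(x_0,0)$ (resp.\ $(0,y_0)$) lies in $U^{\perp_\omega}=U$ whenever $x_0\perp D(L)$ (resp.\ $y_0\perp R(L)$), and handle the forward direction via $G(L)^*=G(L^*)=G(L^{-1})$. The paper is simply more terse, packaging closedness into the phrase ``graph of a (closed) isometric operator'' and leaving the passage from ``closed isometric with dense domain and range'' to ``unitary'' implicit, whereas you spell these out.
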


\begin{proof}
Let $U = U^{\perp_\omega}$. Then
\begin{align}\label{formel2}
0 = \omega((x,y),(x,y)) = (\|x\|_{\cH_1}^2-\|y\|_{\cH_2}^2)\quad((x,y)\in U),
\end{align}
and this implies that $U$ is the graph of a (closed) isometric operator $L$.
Let $u\in D(L)^\perp$. Then $(u,0)\in U^{\perp_\omega}=U$, and \eqref{formel2} implies that $u=0$, i.e., $D(L)$ is
dense. Similarly one obtains that the range of $L$ is dense. This implies that
$L$ is unitary.

Let $L\from\cH_1\to\cH_2$ be unitary. Then $\gr{L}^* = \gr{L^*} = \gr{L^{-1}} =
\gr{L}^{-1}$ and therefore $\gr{L}^{\perp_\omega} = \gr{L}$.
\end{proof}

The following correspondence between skew-self-adjoint relations and unitary
operators will establish the link between our setup and one of the versions of
boundary triples; cf. Remark~\ref{bound-tr-unitary}.
We define the unitary mapping $C$ in $\cH\oplus\cH$, given by the matrix $C:=
\frac1{\sqrt2}
\left(
\begin{smallmatrix}
1 &1\\
{-1} &1
\end{smallmatrix}
\right)$. Let $\omega_{\rm s}$ be the standard symmetric form and
$\omega_{\rm u}$ the standard unitary form on $\cH\oplus\cH$. One checks that
then
\[
\omega_{\rm s}((x,y),(u,v))=
\omega_{\rm u}(C(x,y),C(u,v))\qquad((x,y),(u,v)\in\cH\oplus\cH).
\]

\begin{proposition}\label{skew-sa-unitary}
Let $C$ be as above. Then a linear relation $U\sse\cH\oplus\cH$ is
skew-self-adjoint if and only if $CU$ is the graph of a unitary operator.
\end{proposition}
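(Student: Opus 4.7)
The plan is to reduce the claim to a direct combination of three facts already established in the excerpt: (i) the characterization of skew-self-adjoint relations as the $\omega_{\rm s}$-self-orthogonal subspaces of $\cH\oplus\cH$ from Example~\ref{ex:forms}(b); (ii) the characterization of graphs of unitary operators as the $\omega_{\rm u}$-self-orthogonal subspaces of $\cH\oplus\cH$ from Proposition~\ref{prop:uni}; and (iii) the transfer of self-orthogonality under a linear bijection that intertwines the two forms, which is precisely Corollary~\ref{prop:surj}.

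Concretely, I would first note that the matrix $C$ defines a linear bijection of $\cH\oplus\cH$ onto itself (being unitary, hence in particular surjective). The identity
\[
\omega_{\rm s}((x,y),(u,v)) = \omega_{\rm u}(C(x,y),C(u,v)) \qquad((x,y),(u,v)\in\cH\oplus\cH)
\]
stated in the text just before the proposition says that the hypothesis of Corollary~\ref{prop:surj} is satisfied with $X_1 = X_2 = \cH\oplus\cH$, $\omega_1 = \omega_{\rm s}$, $\omega_2 = \omega_{\rm u}$ and $F = C$.

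Corollary~\ref{prop:surj} then yields that a subspace $U\subseteq\cH\oplus\cH$ is $\omega_{\rm s}$-self-orthogonal if and only if there is an $\omega_{\rm u}$-self-orthogonal subspace $V\subseteq\cH\oplus\cH$ with $U = C^{-1}(V)$; since $C$ is bijective, this condition $U = C^{-1}(V)$ is equivalent to $V = CU$. Combining with (i) and (ii) above, $U$ is skew-self-adjoint if and only if $CU$ is the graph of a unitary operator, which is the claim.

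There is no real obstacle here — the only things to double-check are the bijectivity of $C$ (immediate from $C^*C = I$) and the intertwining identity for the two forms, which is a short matrix computation that the paper has essentially already recorded. Thus the proof is just a clean instance of the abstract transfer principle of Section~\ref{sec:sls}.
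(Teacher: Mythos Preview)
Your proof is correct and follows essentially the same route as the paper's: both use Corollary~\ref{prop:surj} (applied with the bijection $C$ intertwining $\omega_{\rm s}$ and $\omega_{\rm u}$) together with the characterizations of skew-self-adjoint relations and of graphs of unitary operators as self-orthogonal subspaces. The only cosmetic difference is that you cite Example~\ref{ex:forms}(b) for the first characterization, whereas the paper cites Remark~\ref{rem:skew_XL}(b); your reference is in fact the more direct one.
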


\begin{proof}
Theorem \ref{prop:surj}(b) implies that the $\omega_{\rm s}$-self-orthogonality of
$U$ is equivalent to the $\omega_{\rm u}$-self-orthogonality of $CU$. Applying
Remark~\ref{rem:skew_XL}(b) and Proposition~\ref{prop:uni} one obtains the
assertion.
\end{proof}

\begin{remarks}
(a) If $A$ is a skew-self-adjoint operator in $\cH$, then $C$ applied to the
graph of $A$ yields the graph of the Cayley transform $(A-I)(A+I)^{-1}$ of $A$.

(b) The statement corresponding to Proposition~\ref{skew-sa-unitary}, for
self-adjoint operators instead of skew-self-adjoint operators requires a
complex Hilbert space. In this form, the result is contained in
\cite[Theorem~4.6]{arens61}. The mapping inducing the equivalence is then
defined by the matrix $C:=
\frac1{\sqrt2}
\left(
\begin{smallmatrix}
1 &-\imu\\
{-1} &-\imu
\end{smallmatrix}
\right)$, for the Cayley transform $(A-\imu)(A+\imu)^{-1}$ of a self-adjoint
operator $A$.
\end{remarks}

\section{Boundary systems}
\label{sec:bs}

Let $\cH$ be a Hilbert space, $H_0$ a symmetric or skew-symmetric operator in
$\cH$.

\begin{definition}
  A \emph{boundary system} $(\Omega,\cG_1,\cG_2,\bm,\omega)$ for $H_0$ consists
of a sesquilinear form $\Omega$ on $G(H_0^*)$,
two Hilbert spaces $\cG_1,\cG_2$, a linear and surjective mapping $\bm\from 
G(H_0^*)\to \cG_1\oplus\cG_2$ and a sesquilinear form $\omega$ on 
$\cG_1\oplus\cG_2$, such that 
\[
\Omega((x,H_0^*x),(y,H_0^*y)) = \omega(\bm(x,H_0^*x),\bm(y,H_0^*y))
\quad(x,y\in D(H_0^*)).
\]
\end{definition}

For a boundary system let $F_j\from D(H_0^*)\to \cG_j$ ($j\in\{1,2\}$) such that 
$F(x,H_0^*x) = (F_1(x),F_2(x))$ for all $x\in D(H_0^*)$. 

\begin{remarks}
(a)
If $\cG_1 = \cG_2 = \cG$ and both $\Omega$ and $\omega$ are the standard
skew-symmetric forms (on the corresponding spaces), then
$(\Omega,\cG,\cG,\bm,\omega)$ corresponds to the version of a
\emph{boundary triple} as treated in \cite{brge03, brgepa2008}. The usual
notation is $(\cG,\Gamma_1,\Gamma_2)$, where
$\Gamma_1,\Gamma_2\from D(H_0^*)\to \cG$ are such that
$(\Gamma_1,\Gamma_2)\from D(H_0^*)\to \cG\oplus\cG$ is surjective. Note that 
here $\Gamma_j = F_j$ ($j\in\{1,2\}$). 

(b)
If $\Omega$ is the standard skew-symmetric form on $G(H_0^*)$, 
then $\Gamma$, given by $\Gamma(x,y):= \Omega((x,H_0^*x),(y,H_0^*y))$ ($x,y\in
D(H_0^*)$) is also called the
\emph{boundary form} of $H_0$; cf.~\cite[Def.\,7.1.1]{deOliv2009}.

(c)
The notion of \emph{systems with integration by parts (SWIPs)}, defined
in \cite[Definition 3.4]{waukal11} deals with a skew-symmetric operator $H_0$.
Then SWIPs correspond to the case that $\Omega$ and $\omega$ are the standard
skew-symmetric forms on $\cH \oplus \cH$ and $\cG \oplus \cG$, respectively,
with $\cG=\cG_1=\cG_2$. Additionally, $F\colon G(H_0^*)\to \cG \oplus \cG$ is
assumed to be continuous (with respect to the graph norm of
$H_0^*$), to vanish on $G(H_0)$, and to be bijective on $G(H_0)^{\bot}\cap
G(H_0^*)$.
\end{remarks}

The following theorem is a version of the well-established result how
self-adjoint
extensions of symmetric operators can be obtained using boundary triples; see
\cite{brge03, brgepa2008, deOliv2009} and references therein. In our context
the domains of the self-adjoint extensions are characterized by
means of self-adjoint relations in the space of boundary values.

\begin{theorem}
\label{thm:sa_XL}
Let $H_0$ be a symmetric operator, and
let $(\Omega,\cG,\cG,\bm,\omega)$ be a boundary system for $H_0$, where
$\Omega,\omega$ are the standard skew-symmetric forms.
Then an operator $H\sse H_0^*$ is self-adjoint if and only if there 
exist a closed subspace $X\subseteq \cG$ and a
self-adjoint operator $L$ in $X$ such that
\[
D(H) = \set{x\in D(H_0^*)}{\bm_1(x) \in D(L),\, L\bm_1(x) =
Q\bm_2(x)},
\]
where $Q\from \cG\to X$ is the orthogonal projection.
\end{theorem}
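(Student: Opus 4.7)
The plan is to transport the problem along the surjective boundary map into the small space $\cG\oplus\cG$, where Proposition~\ref{thm:lag_XL} already gives a complete description of self-adjoint linear relations. Since a boundary system satisfies $\Omega=\omega\circ(F\times F)$ and $F$ is surjective, Corollary~\ref{prop:surj} applies and delivers a bijection $U\mapsto F^{-1}(U)$ between $\omega$-self-orthogonal subspaces of $\cG\oplus\cG$ and $\Omega$-self-orthogonal subspaces of $G(H_0^*)$. By Example~\ref{ex:forms}(a) the former are exactly the self-adjoint linear relations in $\cG\oplus\cG$, and by Proposition~\ref{thm:lag_XL} they are precisely the sets $U=G(L)\oplus(\{0\}\oplus X^\perp)$ with $X\sse\cG$ closed and $L$ self-adjoint in $X$. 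Unwinding the condition $F(x,H_0^*x)\in U$ componentwise gives $F_1(x)\in D(L)$ together with $F_2(x)-LF_1(x)\in X^\perp$, which is exactly $QF_2(x)=LF_1(x)$.

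It remains to match $\Omega$-self-orthogonal subspaces $W\sse G(H_0^*)$ with self-adjoint operators $H\sse H_0^*$ via $W=G(H)$. Any subspace of $G(H_0^*)$ is trivially the graph of a restriction of $H_0^*$, so this assignment is a bijection onto the set of operators $H\sse H_0^*$. Using the identity $\Omega((x,H_0^*x),(y,H_0^*y))=\scpr(x,H_0^*y)_\cH-\scpr(H_0^*x,y)_\cH$, a direct computation gives
\[
G(H)^{\perp_\Omega}=\set{(y,H_0^*y)\in G(H_0^*)}{y\in D(H^*),\ H^*y=H_0^*y}.
\]
The easy direction is then immediate: if $H$ is self-adjoint with $H\sse H_0^*$, then $H=H^*\sse H_0^*$, so $D(H^*)\sse D(H_0^*)$ with $H^*y=H_0^*y$, and the right-hand side collapses to $G(H)$.

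The main obstacle is the reverse direction: to conclude self-adjointness of $H$ from $G(H)^{\perp_\Omega}=G(H)$ alone, because the definition of a boundary system does not a priori force $F$ to vanish on $G(\overline{H_0})$, so the inclusion $\overline{H_0}\sse H$ must be extracted from the self-orthogonality itself. Symmetry of $H$ follows at once from $G(H)\sse G(H)^{\perp_\Omega}$. For the extra inclusion, for any $y\in D(\overline{H_0})$ and any $x\in D(H)\sse D(H_0^*)$,
\[
\Omega((x,H_0^*x),(y,H_0^*y))=\scpr(x,\overline{H_0}y)_\cH-\scpr(H_0^*x,y)_\cH=0,
\]
because $(\overline{H_0})^*=H_0^*$; thus $(y,H_0^*y)\in G(H)^{\perp_\Omega}=G(H)$, proving $\overline{H_0}\sse H$. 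Taking adjoints yields $H^*\sse H_0^*$, so every $y\in D(H^*)$ satisfies $y\in D(H_0^*)$ with $H^*y=H_0^*y$, and the self-orthogonality condition then forces $y\in D(H)$, giving $H=H^*$. Composing this identification with the $F$-transport of Corollary~\ref{prop:surj} and the coordinate unwinding of the first paragraph completes both implications.
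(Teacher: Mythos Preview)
Your argument is correct and follows the same route as the paper: transport via Corollary~\ref{prop:surj}, then invoke Proposition~\ref{thm:lag_XL}, and separately verify that $\Omega$-self-orthogonal subspaces of $G(H_0^*)$ are exactly the graphs of self-adjoint restrictions of $H_0^*$. The only difference is packaging: the paper isolates this last equivalence as Lemma~\ref{sub-symm-adj} and proves it in pure form-orthogonality language (showing $U^{\perp_\Omega}\supseteq G(H_0)$, hence $U^{\perp_\Omega}\sse G(H_0^*)$, hence the $\Omega$-orthogonal complement in $\cH\oplus\cH$ agrees with the one in $G(H_0^*)$, and then \eqref{ortho-id} gives $U=U^*$), whereas you compute $G(H)^{\perp_\Omega}=H^*\cap G(H_0^*)$ explicitly and argue with operator adjoints---same content, different phrasing.
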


In the proof we will need the following auxiliary result:

\begin{lemma}\label{sub-symm-adj}
Let $H_0$ be a symmetric operator, and let $\Omega$ be the standard
skew-symmetric form on $\cH\oplus\cH$ and $\check\Omega$ its restriction to
$G(H_0^*)$. Then an operator $H\sse H_0^*$ is self-adjoint if and only
if its graph is a Lagrangian subspace of $G(H_0^*)$ with respect to
$\check\Omega$.
\end{lemma}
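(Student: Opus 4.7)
My plan is to unwind the Lagrangian condition as an adjoint relation identity and then deduce self-adjointness step by step. I would start by noting that
\[
\check\Omega((u,H_0^*u),(x,H_0^*x)) = \scpr(u,H_0^*x) - \scpr(H_0^*u,x),
\]
and this vanishes for all $x \in D(H)$ (where $H_0^*x = Hx$) iff $(u, H_0^*u) \in G(H)^*$. Hence $G(H)^{\perp_{\check\Omega}} = G(H)^* \cap G(H_0^*)$, and being Lagrangian means $G(H) = G(H)^* \cap G(H_0^*)$. For the ``only if'' direction, self-adjointness of $H$ combined with $H \subseteq H_0^*$ yields $\overline{H_0} = (H_0^*)^* \subseteq H^* = H$ upon taking adjoints, so $G(H)^* = G(H) \subseteq G(H_0^*)$, and therefore $G(H)^* \cap G(H_0^*) = G(H)$.

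For the ``if'' direction I would proceed in four steps. (i) Extract $H_0 \subseteq H$ directly from the Lagrangian condition: for $x \in D(H_0)$, the pair $(x,H_0 x) \in G(H_0^*)$ satisfies
\[
\check\Omega((x,H_0 x),(y,H_0^*y)) = \scpr(x,H_0^*y) - \scpr(H_0 x,y) = 0
\]
for every $y \in D(H)$ by definition of $H_0^*$, so $(x,H_0 x) \in G(H)^{\perp_{\check\Omega}} = G(H)$. (ii) Consequently $D(H) \supseteq D(H_0)$ is dense, so $H^*$ is defined as an operator. (iii) From $G(H) \subseteq G(H)^{\perp_{\check\Omega}} \subseteq G(H)^*$ we immediately read off $H \subseteq H^*$. (iv) Conversely, $H_0 \subseteq H$ forces $H^* \subseteq H_0^*$, so any $u \in D(H^*)$ produces $(u,H^*u) = (u,H_0^*u) \in G(H_0^*)$; the defining identity $\scpr(H^*u,x) = \scpr(u,Hx)$ for $x \in D(H)$ then places this pair in $G(H)^* \cap G(H_0^*) = G(H)$, whence $u \in D(H)$ and $Hu = H^*u$.

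The main obstacle is step (i) in the ``if'' direction: without it neither the dense-domain property of $H$ (so that $H^*$ is meaningful as an operator) nor the inclusion $D(H^*) \subseteq D(H_0^*)$ (used in (iv) to keep $(u,H^*u)$ inside $G(H_0^*)$) is available. The crucial observation is that $(x,H_0 x)$ for $x \in D(H_0)$ automatically $\check\Omega$-annihilates all of $G(H_0^*)$ by the very definition of $H_0^*$, so the Lagrangian property forces $G(H_0) \subseteq G(H)$ essentially for free.
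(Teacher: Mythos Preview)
Your argument is correct and follows essentially the same route as the paper. Both proofs hinge on the observation that the Lagrangian hypothesis forces $G(H_0)\subseteq G(H)$ (your step~(i); the paper's chain $U^{\perp_\Omega}\supseteq G(H_0^*)^{\perp_\Omega}\supseteq G(H_0)$ together with $U=U^{\perp_{\check\Omega}}=U^{\perp_\Omega}\cap G(H_0^*)$), and then use this inclusion to upgrade the $\check\Omega$-Lagrangian property to the full $\Omega$-Lagrangian property. The only difference is stylistic: the paper stays entirely at the level of subspaces and orthogonal complements, concluding $U^{\perp_\Omega}=U$ in one line from $G(H_0)\subseteq U\Rightarrow U^{\perp_\Omega}\subseteq G(H_0^*)$, whereas you translate back to operator language, invoke density of $D(H)$ to make $H^*$ an operator, and verify $H\subseteq H^*$ and $H^*\subseteq H$ separately. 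Your version is more explicit about the operator-theoretic content; the paper's is more compact but relies implicitly on the identification $M^{\perp_\Omega}=M^*$ from Example~\ref{ex:forms}(a) to interpret $U^{\perp_\Omega}=U$ as self-adjointness.
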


\begin{proof}
Necessity is trivial. In order to show sufficiency let $U\sse G(H_0^*)$ be
Lagrangian in $G(H_0^*)$. Then $U^{\perp_\Omega}\supseteq
G(H_0^*)^{\perp_\Omega}=(G(H_0)^{\perp_\Omega})^{\perp_\Omega}
\supseteq G(H_0)$, and therefore $U=U^{\perp_{\check\Omega}}
= U^{\perp_\Omega}\cap G(H_0^*)\supseteq G(H_0)$.
This implies that $U^{\perp_\Omega}\sse G(H_0^*)$, and finally
that $U^{\perp_\Omega} = U^{\perp_\Omega}\cap G(H_0^*)=U^{\perp_{\check\Omega}}
=U$.
\end{proof}

\begin{proof}[Proof of Theorem~\ref{thm:sa_XL}]
  By Lemma \ref{sub-symm-adj}, $H$ is self-adjoint if and only if $G(H)$
is $\Omega$-self-orthogonal in $G(H_0^*$). By Corollary 
\ref{prop:surj}, 
  this is equivalent to the existence of an $\omega$-self-orthogonal 
subspace $V\sse\cG\times\cG$ such that $G(H)=\bm^{-1}(V)$. The description of 
$V$ in Proposition~\ref{thm:lag_XL} then yields the description of $H$ as a 
restriction of $H_0^*$.
\end{proof}

\begin{remark}\label{sub-skew-symm-adj}
The corresponding statement for skew-symmetric and skew-self-adjoint
operators is shown in the same way.
\end{remark}

Boundary systems as in the previous theorem make use of standard skew-symmetric
forms in the (big) Hilbert
space $\cH$ as well as in the (small) Hilbert space $\cG$, thus
characterizing self-adjoint restrictions of $H_0^*$ by means of Lagrangian
subspaces in $\cG\oplus\cG$. 
In the following statement we describe the application of boundary 
systems to the characterization of skew-self-adjoint extensions of 
skew-symmetric operators by unitary operators in the `boundary space'.

\begin{theorem}
\label{thm:ssa_L}
Let $H_0$ be a skew-symmetric operator, and 
let $(\Omega,\cG_1,\cG_2,\bm,\omega)$ be a boundary system for $H_0$,
where $\Omega$ is the standard symmetric form and $\omega$ is the
standard unitary form.
Then the operator $H\sse H_0^*$ is skew-self-adjoint if and only if
there exists a unitary operator $L$ from $\cG_1$ to
$\cG_2$ such that
\[
          D(H) = \sset{ x\in D(H_0^*)}{ L \bm_1(x) = \bm_2(x) }.
\]
\end{theorem}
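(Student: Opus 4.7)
The plan is to follow exactly the template of the proof of Theorem~\ref{thm:sa_XL}, replacing each ingredient by its unitary/symmetric analogue. First I would invoke the skew-symmetric analogue of Lemma~\ref{sub-symm-adj} (mentioned in Remark~\ref{sub-skew-symm-adj}): since $\Omega$ is the standard symmetric form on $\cH\oplus\cH$, Example~\ref{ex:forms}(b) tells us that a linear relation $M\sse\cH\oplus\cH$ is $\Omega$-self-orthogonal exactly when it is skew-self-adjoint; together with the argument of Lemma~\ref{sub-symm-adj} (with $\Omega$ in place of the skew-symmetric form, using that $G(H_0)^{\perp_\Omega}\supseteq G(H_0^*)^{\perp_\Omega}$ by skew-symmetry of $H_0$) this gives that $H\sse H_0^*$ is skew-self-adjoint if and only if $G(H)$ is self-orthogonal in $G(H_0^*)$ with respect to the restriction $\check\Omega$ of $\Omega$.

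Next I would apply Corollary~\ref{prop:surj} to the surjective linear map $F\colon G(H_0^*)\to\cG_1\oplus\cG_2$, which by the definition of a boundary system intertwines $\Omega$ and $\omega$. This yields that $G(H)$ is $\check\Omega$-self-orthogonal if and only if there is an $\omega$-self-orthogonal subspace $V\sse\cG_1\oplus\cG_2$ with $G(H)=F^{-1}(V)$. Since $\omega$ is the standard unitary form, Proposition~\ref{prop:uni} identifies the $\omega$-self-orthogonal subspaces of $\cG_1\oplus\cG_2$ precisely with graphs of unitary operators $L\colon\cG_1\to\cG_2$, so $V=G(L)$ for some such $L$.

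Finally I would translate this graph picture into the domain condition: since $F(x,H_0^*x)=(F_1(x),F_2(x))$, the identity $G(H)=F^{-1}(G(L))$ is equivalent to
\[
D(H)=\set{x\in D(H_0^*)}{(F_1(x),F_2(x))\in G(L)}=\set{x\in D(H_0^*)}{LF_1(x)=F_2(x)},
\]
which is the claim. The converse direction is obtained by reading the same chain backwards.

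I do not expect any real obstacle: every step is a direct appeal to an earlier result. The only point that deserves a moment of care is the skew analogue of Lemma~\ref{sub-symm-adj}, because one has to check that $H_0$ being skew-symmetric together with $\Omega$ being the standard symmetric form still gives $G(H_0)\sse G(H_0)^{\perp_\Omega}$; this is immediate from Example~\ref{ex:forms}(b) and the definition of skew-symmetric relations, so no extra work is needed.
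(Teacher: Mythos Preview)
Your proposal is correct and follows essentially the same route as the paper: reduce skew-self-adjointness to $\Omega$-self-orthogonality in $G(H_0^*)$ (the skew analogue of Lemma~\ref{sub-symm-adj}, i.e.\ Remark~\ref{sub-skew-symm-adj}), transfer via Corollary~\ref{prop:surj} to $\omega$-self-orthogonality in $\cG_1\oplus\cG_2$, and then invoke Proposition~\ref{prop:uni}. One small imprecision: your parenthetical ``$G(H_0)^{\perp_\Omega}\supseteq G(H_0^*)^{\perp_\Omega}$ by skew-symmetry'' is not quite the right inclusion (for skew-symmetric $H_0$ one has $G(-H_0)\sse G(H_0^*)$, not $G(H_0)\sse G(H_0^*)$); the actual analogue of the Lemma~\ref{sub-symm-adj} argument uses $G(H_0^*)^{\perp_\Omega}=G(-\overline{H_0})\sse G(H_0^*)$ via Example~\ref{ex:forms}(b), but this does not affect the validity of your plan.
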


\begin{proof}
 By Remark \ref{rem:skew_XL}(b), skew-self-adjoint operators correspond 
to self-ortho\-gonal subspaces with respect to the standard symmetric form. 
 By Corollary \ref{prop:surj}, $\bm$ establishes a 
correspondence between these subspaces and self-orthogonal subspaces 
with respect to the standard unitary form.
According to Proposition \ref{prop:uni} these subspaces are just the graphs of unitary operators.
\end{proof}

\begin{remarks}\label{bound-tr-unitary}
(a)
Assuming that $\cH$ is a complex Hilbert space, that $H_0$ is symmetric, and
letting $\Omega$ be the standard skew-symmetric form multiplied by the
imaginary unit, the (otherwise unchanged) setup of
Theorem~\ref{thm:ssa_L} yields the version of boundary triples as treated in
\cite{gor91,deOliv2009}. (We note that the hypothesis in
\cite[Section 7.1.1]{deOliv2009} that $\rho_1,\rho_2$ have dense range should
be replaced by the requirement that
$\rho_1\times\rho_2$
is surjective.) The
equivalence of the two versions of boundary triples is clear from
Proposition~\ref{skew-sa-unitary}.

(b)
We mention that in our setup there is no need for the a priori requirement of
equality of deficiency indices. 
\end{remarks}

\section{Application to quantum graphs}\label{sec:qg} 
 
We now apply the 
results 
of Section 
\ref{sec:bs} 
to Laplacians and first
derivative operators on metric graphs. In order to do so we
first introduce the relevant notions.
The following description of (directed multi-) graphs is an extension of the
notation presented in \cite{kkvw09}.

Let $\Gamma = (V,E,a,b,\vtx_0,\vtx_1)$ be a metric graph. 
This means that $V$ is the set of \emph{vertices} (or \emph{nodes}) of $\Gamma$ and
$E$ the set of \emph{edges}.
Furthermore let $a,b\from E\to [-\infty,\infty]$, and assume that $a_e < b_e$
and that the interval $(a_e,b_e)\sse \R$ corresponds to the edge $e$ ($e\in E$).
Denote $E_\li := \sset{ e \in E}{ a_e>-\infty }$ and $E_\re := \sset{ e\in E }{ b_e
<\infty }$.
Let $\vtx_0\from E_\li\to V$, $\vtx_1\from E_\re\to V$ associate with each edge 
$e\in E_\li$ or $e\in E_\re$, respectively, a ``starting vertex'' $\vtx_0(e)$ or 
an ``end vertex'' $\vtx_1(e)$, respectively.
Note that we do not assume finiteness (or countability) of
$V$ and $E$.

Assume that there is a positive lower bound for the lengths of the edges,
i.e.,
\begin{equation}
\label{eq:edgelength}
l:=\inf_{e\in E} (b_e - a_e) >0.
\end{equation}
The self-adjoint operators we treat will act in the Hilbert space
\[
\cHg:=\bigoplus_{e\in E}L_2(a_e,b_e).
\]

\begin{remark}
\label{rem:Sobolev}
By Sobolev's lemma there exists a
continuous linear operator $\psi\colon W_2^1(0,l) \to
\K^2,\
f\mapsto (f(0),f(l))$ from the first order Sobolev space to the space of
boundary values of an interval.
(In fact, one can compute that $\lVert \psi \rVert =
\bigl(\frac{\cosh l+1}{\sinh l}\bigr)^{1/2}$.)
\end{remark}

Let $E':= (E_\li\times\{0\}) \cup
(E_{\re}\times\{1\})$. 
The set $E'$ encodes all finite boundary points of all edges.
Note that \eqref{eq:edgelength} and Remark \ref{rem:Sobolev}
give rise to continuous linear mappings $\trace,\strace\from \bigoplus_{e\in
E}W_2^1(a_e,b_e) \to \ell_2(E')$,
the \emph{trace} and \emph{signed trace}, respectively,
defined by
\begin{align*}
	(\trace f)(e,j) & := \begin{cases}
			f_e(a_e) & (e\in E_\li, j=0),\\
			f_e(b_e) & (e\in E_\re, j=1),
			\end{cases}\\
	(\strace f)(e,j) & := \begin{cases}
			f_e(a_e) & (e\in E_\li, j=0),\\
			-f_e(b_e) & (e\in E_\re, j=1).
			\end{cases}
\end{align*}
The trace mappings defined above will be used in the study of the 
Laplacian on the graph.
For the 
case of 
the derivative operator we need the mappings $\trace_\li \from \bigoplus_{e\in
E}W_2^1(a_e,b_e) \to \ell_2(E_\li)$ and $\trace_\re \from \bigoplus_{e\in E}
W_2^1(a_e,b_e)\to \ell_2(E_\re)$,
defined by
\[
   (\trace_\li f)(e) := f_e(a_e) \quad(e\in E_\li),\qquad
(\trace_\re f)(e):= f_e(b_e)\quad (e\in E_\re).
\]

\subsection{The Laplace operator}

As a first application of the considerations in the previous sections we now
treat the 
Laplacian 
in $\cHg$. Define
the maximal operator $\hat H$ in $\cHg$,
\begin{align*}
	D(\hat{H}) & := \bigoplus_{e\in E} W_2^2(a_e,b_e),\\
	\hat{H}f & := (-f_e'')_{e\in E}.
\end{align*}
The operator $\hat{H}$ is the adjoint of the minimal operator defined as the
closure of the restriction of $\hat H$ to 
$\bigl(\prod _{e\in E}
C_c^\infty(a_e,b_e)\bigr)\cap D(\hat H)$.

For $f\in \bigoplus_{e\in E} W_2^1(a_e,b_e)$ 
 abbreviate $(f_e')_{e\in E}$ by $f'$.
Define $\bm\from \gr{\hat{H}}\to \ell_2(E')\oplus \ell_2(E')$
by
\[\bm(f,\hat{H}f) := (\trace f, \strace f').\]

Then $\bm$ is linear. Using \eqref{eq:edgelength} one
obtains that $\bm$ is continuous and surjective. To show surjectivity, let
$\eta\in C^\infty(0,l)$, with support $\spt\eta\sse(0,l/2)$, $\eta=1$ in a
neighbourhood of $0$. For prescribed boundary values $\alpha,\beta\in\K$ for the
function and its derivative define $f(\xi):=(\alpha+\beta \xi)\eta(\xi)$
($\xi\in(0,l)$). Then there exists a constant $c\ge0$ (independent of
$\alpha,\beta$) such that $\|f\|_2^2+\|f''\|_2^2\le c(|\alpha|^2+|\beta|^2)$.
As a consequence one obtains that for each $(\alpha,\beta)\in\ell_2(E')\oplus
\ell_2(E')$ there exists $f\in D(\hat H)$ such that
$\bm(f,\hat{H}f)=(\alpha,\beta)$. (See also~\cite{schubert2011,lsv2012}.)

For $f,g\in D(\hat{H})$, integration by parts
yields
\begin{align*}
&\scpr(f,\hat{H}g) - \scpr(\hat{H}f,g)\\ 
& = 
\sum_{e\in E_\li\cap E_\re}(f_e\overline{(-g_e')})\big|_{a_e}^{b_e}
+\sum_{e\in E_{\li}\setminus E_\re}f_e(a_e)\overline{g_e'(a_e)}
-\sum_{e\in E_{\re}\setminus E_\li}f_e(b_e)\overline{g_e'(b_e)} \\
& \qquad
-\sum_{e\in E_\li\cap E_\re}(-f_e'\overline{g_e})\big|_{a_e}^{b_e}
-\sum_{e\in E_{\li}\setminus E_\re}f_e'(a_e)\overline{g_e(a_e)}
+\sum_{e\in E_{\re}\setminus E_\li}f_e'(b_e)\overline{g_e(b_e)}
 \\
& = \scpr(\trace f , \strace g') - \scpr(\strace f' , \trace g)
= \bigscpr({\begin{pmatrix} 0 & -1 \\ 1 & 0 \end{pmatrix}\bm(f,\hat{H}f)} ,
{\bm(g,\hat{H}g)}).
\end{align*}

Defining $\Omega$ and $\omega$ to be the standard skew-symmetric form on
$\gr{\hat{H}}$ and $\ell_2(E')\oplus \ell_2(E')$, respectively,
we obtain
\[\Omega((f,\hat{H}f),(g,\hat{H}g)) = \omega(\bm(f,\hat{H}f),\bm(g,\hat{H}g))
\quad(f,g\in D(\hat{H})).\]

\begin{theorem}\label{s-a Lapl}
An operator $H\sse\hat H$ is self-adjoint if and only if there exist a
closed subspace $X\subseteq \ell_2(E')$ and a
	self-adjoint operator $L$ in $X$ such that
	\[D(H) = \set{f\in D(\hat{H})}{\trace f \in D(L),\, L\trace f = Q
\strace f'},\]
	where $Q\from \ell_2(E')\to X$ is the orthogonal projection.
\end{theorem}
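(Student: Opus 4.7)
The plan is to recognize that everything preceding the theorem statement has been arranged precisely so that Theorem \ref{thm:sa_XL} applies directly. Concretely, I would set $H_0$ to be the minimal operator, so that $H_0^* = \hat H$ (as stated in the text just after the definition of $\hat H$), and note that $H_0$ is symmetric because on compactly supported smooth functions integration by parts has no boundary terms.

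Next I would verify that $(\Omega,\ell_2(E'),\ell_2(E'),F,\omega)$ is a boundary system for $H_0$ in the sense of Section \ref{sec:bs}, where $\Omega$ and $\omega$ are the standard skew-symmetric forms on $G(\hat H)$ and $\ell_2(E')\oplus\ell_2(E')$, respectively, and $F(f,\hat H f)=(\trace f,\strace f')$. The linearity of $F$ is immediate, surjectivity was established before the theorem using the ansatz $f(\xi)=(\alpha+\beta\xi)\eta(\xi)$ together with \eqref{eq:edgelength} and Remark \ref{rem:Sobolev}, and the compatibility
\[
\Omega((f,\hat H f),(g,\hat H g))=\omega(F(f,\hat H f),F(g,\hat H g))
\]
is exactly the integration-by-parts computation displayed just above the theorem statement.

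With the boundary system in place, Theorem \ref{thm:sa_XL} applies with $\cG=\ell_2(E')$, giving a bijection between self-adjoint restrictions $H\subseteq H_0^*=\hat H$ and pairs $(X,L)$ consisting of a closed subspace $X\subseteq\ell_2(E')$ and a self-adjoint operator $L$ in $X$, via
\[
D(H)=\{x\in D(\hat H)\setcol F_1(x)\in D(L),\ LF_1(x)=QF_2(x)\}.
\]
Since $F_1(f)=\trace f$ and $F_2(f)=\strace f'$, this is precisely the description claimed in the theorem.

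There is essentially no hard step: all the analytic content (surjectivity of the trace map, the integration-by-parts identity, the fact that $\hat H=H_0^*$) has been dispatched prior to the statement, and the remaining work is the citation of Theorem \ref{thm:sa_XL}. If anything, the only place requiring a moment's care is confirming surjectivity of $F$ into $\ell_2(E')\oplus\ell_2(E')$ (not merely into the algebraic direct sum), which is controlled by the uniform bound on the lifting constant for $(\alpha,\beta)\mapsto f$ noted just before the theorem and guaranteed by the positive lower edge length $l$.
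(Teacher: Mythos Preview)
Your proposal is correct and follows exactly the paper's approach: the paper's proof simply observes that the preceding discussion establishes $(\Omega,\ell_2(E'),\ell_2(E'),F,\omega)$ as a boundary system for the minimal operator, and then invokes Theorem~\ref{thm:sa_XL}. Your write-up is a faithful (and slightly more detailed) unpacking of that two-line argument.
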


\begin{proof}
The previous discussion shows that $(\Omega,\ell_2(E'),\ell_2(E'),F,\omega)$ is
a boundary system. Therefore the assertion follows from Theorem \ref{thm:sa_XL}.
\end{proof}

\begin{remarks}
(a)
We note that the boundary conditions leading to self-adjoint Laplacians can be
described in different ways. We refer to \cite[Theorem 5]{kuc08} for an account
of these descriptions. However, as already mentioned in the Introduction, in
the present general case the obtained self-adjoint operator may fail to be
semibounded, and therefore the form method may not be available.

(b) In the known cases where the operator $H$ in Theorem \ref{s-a Lapl} can
be constructed by the form method, the space $X$ and the (bounded) operator $L$
take a special role in the definition of the form: The space $X$ restricts the
domain of the form, whereas $L$ occurs in a term of the form itself;
cf.~\cite{kuc04,kkvw09}.
\end{remarks}

\begin{examples}
  (a)
  Let $V:=\Z$ and $E:=\Z$. We set $a(n):=n$, $b(n):=n+1$ and $\gamma_0(n):=n$, 
  $\gamma_1(n):=n+1$ for all $n\in\Z$. 
  Then $\Gamma=(V,E,a,b,\gamma_0,\gamma_1)$ is the metric graph $\Z$ with nearest neighbour edges.
  For $n\in \Z$ let $X_n:=\lin\{(1,1)\}$ and $X:=\bigoplus_{n\in\Z} X_n$. Define $L$ in $X$ by
  \[
D(L):= \set{(x_n)_{n\in\Z} \in X}{(nx_n)_{n\in\Z} \in X},\quad L(x_n):= 
(nx_n).
\]
  Then $L$ is self-adjoint but not bounded from below.
  By Theorem \ref{s-a Lapl} the opeator $H\subseteq \hat{H}$ with
  \[
D(H) = \set{f\in D(\hat{H})}{\trace f \in D(L),\, L\trace f = Q\strace f'}
\]
  is self-adjoint. It is easy to see that $f\in D(\hat{H})$ is in 
$D(H)$ if and only if
  \[
f_{n}(n) = f_{n-1}(n),\quad -f_{n}'(n) + f_{n-1}'(n) = 2nf_{n}(n) 
\quad(n\in\Z).
\]
  So, we have encoded $\delta$-type boundary conditions (see e.g.\ \cite[Section 3.2.1]{kuc04}) 
at $n$ with coupling parameter $2n$, for all $n\in\Z$. Note that since $L$ is 
unbounded, also $H$ is not bounded from below and the form method 
cannot be applied to define $H$.

  (b)
  Let $V:=\N_0$ and $E:=\N$. We set $a(n):=0$, $b(n):=n$, $\gamma_0(n):=0$, 
$\gamma_1(n):=n$ for all $n\in\N$. 
Then $\Gamma=(V,E,a,b,\gamma_0,\gamma_1)$ is a metric graph, 
and $0\in V$ is a vertex with infinite degree. Let 
$X_0:=\ell_2(\N)$ and for $n\in\N$ set $X_n:=\{0\}$. Let 
$X:=\bigoplus_{n\in\N_0} X_n$. 
  Let $L_0$ be a self-adjoint operator in $\ell_2(\N)$ which is not 
bounded from below. We set 
  \[
D(L):=\set{(x_n)_{n\in\N_0} \in X}{x_0\in D(L_0)},\quad L(x_n):= 
(L_0x_0,0,\ldots).
\]
  Then $L$ is self-adjoint in $X$ and not bounded from below. 
  By Theorem \ref{s-a Lapl}, $H\subseteq \hat{H}$ defined by $X$ and $L$ is self-adjoint, and also not bounded from below. Thus, the form method to define $H$ is not applicable.
\end{examples}

\subsection{The first derivative operator}

As another application, we describe boundary conditions for the 
derivative operator
that lead to skew-self-adjoint operators.
We define the maximal operator
$\hat H$  on $\cHg$,
\begin{align*}
  D(\hat H) &:= \bigoplus_{e\in E} W_2^1(a_e,b_e), \\
 \hat H f &:= ( f_e')_{e\in E}.
\end{align*}
The operator $\hat H$ is the negative adjoint of the minimal one defined as the
closure of the restriction of $\hat H$ to $\left(\prod_{e\in E}
C_c^\infty(a_e,b_e)\right)\cap D(\hat H)$.

Let $\bm
\colon
G(\hat H) \to  \ell_2(E_\re)\oplus \ell_2(E_\li)$,
\[
   \bm(f,\hat H f) := (\trace_\re f, \trace_\li f). 
\]
Then $F$ is linear, surjective and continuous (for surjectivity construct piecewise affine functions). 
For $f,g\in D(\hat H)$,
integration by parts yields
\begin{align*}
   \scpr( f, \hat Hg) + \scpr(\hat H f, g) & = \sum_{e\in E_\re} f_e(b_e)
\overline{g_e(b_e)} - \sum_{e\in E_\li} f_e(a_e) \overline{g_e(a_e)} \\
        & =  \omega (\bm(f, \hat H f), F(g, \hat H g)),
\end{align*}
 where $\omega$ is the standard unitary form on $\ell_2(E_\re)\oplus
\ell_2(E_\li)$;
cf.~Example \ref{ex:forms}(c).
As a consequence, 
denoting by $\Omega$
the standard symmetric form on $G(\hat H)$, we obtain
\[
 \Omega((f,\hat H f), (g, \hat H g)) = \omega (\bm(f, \hat H f), \bm(g, \hat H
g)) \quad (f,g \in D(\hat H)).
\]

\begin{theorem}
\label{thm:ssa}
An operator $H\sse\hat H$ is skew-self-adjoint if and only if there
exists a unitary operator $L$ from $\ell_2(E_\re)$ to
$\ell_2(E_\li)$ such that
\[
          D(H) = \sset{ f\in D(\hat H)}{ L \trace_\re f = \trace_\li f }.
\]
\end{theorem}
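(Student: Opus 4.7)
The plan is to recognize Theorem~\ref{thm:ssa} as a direct application of Theorem~\ref{thm:ssa_L}, once the boundary system structure has been installed. Essentially, all of the work has been done in the paragraphs immediately preceding the theorem; the proof amounts to collecting those pieces.

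First I would designate the minimal operator $H_0$ (the closure of the first derivative restricted to $\bigl(\prod_{e\in E} C_c^\infty(a_e,b_e)\bigr)\cap D(\hat H)$) as the underlying skew-symmetric operator, so that $\hat H$ plays the role of the (negative) adjoint $-H_0^*$ in the abstract framework of Section~\ref{sec:bs}. Next I would verify the three requirements of a boundary system for $H_0$ with data $(\Omega,\ell_2(E_\re),\ell_2(E_\li),\bm,\omega)$: (i) $\bm$ is linear, which is immediate from the definition; (ii) $\bm$ is surjective, which I would justify by a piecewise-affine construction combined with the uniform lower bound $l>0$ on edge lengths (as remarked in the text, and analogous to the surjectivity argument already carried out for the Laplacian); and (iii) the compatibility identity $\Omega((f,\hat H f),(g,\hat H g)) = \omega(\bm(f,\hat H f),\bm(g,\hat H g))$, which is precisely the last displayed equation before the theorem statement and follows from an integration by parts edgewise, splitting the boundary terms according to whether an edge has a left endpoint, a right endpoint, or both.

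With the boundary system in place, the abstract Theorem~\ref{thm:ssa_L} applies verbatim: an operator $H\sse\hat H$ is skew-self-adjoint if and only if there is a unitary $L\from \ell_2(E_\re)\to\ell_2(E_\li)$ such that
\[
D(H)=\{x\in D(\hat H)\setcol L\bm_1(x)=\bm_2(x)\}.
\]
Finally I would identify the component maps: writing $\bm(f,\hat H f)=(F_1(f),F_2(f))$ one reads off $F_1=\trace_\re$ and $F_2=\trace_\li$, so the condition $L\bm_1(x)=\bm_2(x)$ becomes $L\trace_\re f=\trace_\li f$, which is the description claimed in the theorem.

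The only step with any delicacy is the surjectivity of $\bm$, since for this derivative case one must hit arbitrary prescribed $\ell_2$-valued boundary data at the left endpoints (indexed by $E_\li$) and at the right endpoints (indexed by $E_\re$) simultaneously, using functions with uniformly controlled $W_2^1$ norm. The positive lower bound $l$ in~\eqref{eq:edgelength} allows one to produce on each edge a bump supported near the relevant endpoint and linear at the boundary, with norm bounded by a constant times the absolute value of the prescribed datum; summing these in $\ell_2$ yields the preimage in $D(\hat H)$. No subtlety beyond this is expected — everything else is bookkeeping to match the notation of Theorem~\ref{thm:ssa_L}.
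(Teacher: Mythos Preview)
Your proposal is correct and follows exactly the paper's approach: the paper's own proof is just two sentences, observing that the preceding discussion establishes $(\Omega,\ell_2(E_\re),\ell_2(E_\li),F,\omega)$ as a boundary system and then invoking Theorem~\ref{thm:ssa_L}. Your write-up simply spells out those verifications (linearity, surjectivity via piecewise-affine bumps, and the integration-by-parts identity) in more detail than the paper does.
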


\begin{proof}
The previous discussion shows that
$(\Omega,\ell_2(E_\re),\ell_2(E_\li),F,\omega)$ is a boundary system. Therefore
the assertions follow from Theorem \ref{thm:ssa_L}.
\end{proof}

\begin{remarks}
(a)
  Theorem \ref{thm:ssa} precisely characterizes when there exist
skew-self-adjoint restrictions of $\hat H$.
  Namely, the spaces $\ell_2(E_\re)$ and $\ell_2(E_\li)$ have to be unitarily
equivalent, i.e., $E_\re$ and $E_\li$ have the same cardinality.

	(b)
	An operator $H$ is skew-self-adjoint if and only if 
it is the generator of a $C_0$-group of unitary operators.
Therefore, Theorem
\ref{thm:ssa} describes precisely the unitary groups in $\cHg$ yielding
solutions 
for the 
Cauchy problem for the
transport equation 
	\[u'(t) = Hu(t) \quad(t\in\R)\]
	on $\Gamma$.

	(c)
Let $\K=\C$.	
The Dirac operator in $\cHg$ is defined to be a self-adjoint
restriction of $-i\hat{H}$.
	Since $H$ is skew-self-adjoint if and only if $-iH$ is self-adjoint we
obtain that  
a Dirac operator
$-iH$ is self-adjoint if and only if 
	\[D(-iH) = \{ f\in D(\hat H); L \trace_\re f = \trace_\li f \}\]
	for some unitary $L$ from $\ell_2(E_\re)$ to $\ell_2(E_\li)$.
\end{remarks}

 In both
of
the situations of operators on graphs, i.e., the Laplacian and the
first order derivative, the given structure of the graphs,
encoded in the two maps $\gamma_0$ and $\gamma_1$,
did not occur in the description of the boundary conditions. 
In other words, 
we
replaced the graph by a 
so-called \emph{rose}
(cf.~\cite[Section
4.3]{kuc08}), possibly with infinitely many edges. 
If one wants to consider only `local boundary conditions', i.e., boundary
conditions respecting the adjacency relations, one will obtain the relations
and operators describing the boundary conditions in block form;
cf.~\cite[Section 5]{seivoi11}
(for the case of singular diffusion on finite graphs)
and \cite[Chapter 1]{schubert2011}.
In the case of the
self-adjoint Laplacian, each of the blocks would be a self-adjoint relation,
whereas in the case of the skew-self-adjoint 
derivative operator,
each of the blocks would be unitary.

{\frenchspacing

}

\bigskip
\noindent
Carsten Schubert \\
Friedrich-Schiller-Universit\"at Jena \\
Fakult\"at f\"ur Mathematik und Informatik \\
Ernst-Abbe-Platz 2, 07743 Jena, Germany\\
{\tt carst@hrz.tu-chemnitz.de}\\[3ex]
Christian Seifert\\
TU Hamburg-Harburg\\
Institut f\"ur Mathematik\\
Schwarzenbergstr. 95E\\
21073 Hamburg, Germany\\
{\tt christian.seifert@tu-harburg.de}\\[3ex]
J\"urgen Voigt and Marcus Waurick\\
Technische Universit\"at Dresden\\
Fachrichtung Mathematik\\
01062 Dresden, Germany\\
{\tt juergen.voigt@tu-dresden.de}\\
{\tt marcus.waurick@tu-dresden.de}
\end{document}